\author[P.~Leonetti]{Paolo Leonetti}
\address{Department of Economics, Universit\`a degli Studi dell'Insubria, via Monte Generoso 71, 21100 Varese, Italy}
\email{leonetti.paolo@gmail.com}
\urladdr{\url{https://sites.google.com/site/leonettipaolo/}}
\newtheorem{thm}{Theorem}[section]
\newtheorem{cor}[thm]{Corollary}
\theoremstyle{definition} 
\newtheorem{defi}[thm]{Definition}
\numberwithin{equation}{section}
\newcounter{smallromans}
   \def\MR#1{}
\providecommand{\MR}[1]{}
\providecommand{\MR}{\relax\ifhmode\unskip\space\fi MR }
\providecommand{\href}[2]{#2}
\begin{document}
\title{Almost all sets of nonnegative integers and their small perturbations are not sumsets}


\keywords{Sumset; irreducible set; totally irreducible set; meager set.}
\subjclass[2020]{Primary: 11B13, 54E52; Secondary: 11B05, 11B30.}

\begin{abstract} 
Fix $\alpha \in (0,1/3)$. We show that, from a topological point of view, almost all sets $A\subseteq \mathbb{N}$ have the property that, if $A^\prime=A$ for all but $o(n^{\alpha})$ elements, then $A^\prime$ is not a nontrivial sumset $B+C$. In particular, almost all $A$ are totally irreducible. In addition, we prove that the measure analogue holds with $\alpha=1$. 
\end{abstract}
\maketitle
\thispagestyle{empty}

\section{Introduction}\label{sec:intro}

A subset $A$ of the nonnegative integers $\mathbb{N}$ is said to be \emph{irreducible} if there do not exist $B,C\subseteq \mathbb{N}$ such that $|B|,|C|\ge 2$ and 
$$
A=B+C,
$$
where $B+C:=\{b+c: b \in B, c \in C\}$. 
In addition, we say that $A\subseteq \mathbb{N}$ is \emph{totally irreducible} (or \emph{totally primitive}) if there are no sets $B,C\subseteq \mathbb{N}$ such that $|B|,|C|\ge 2$ and
$$
A=_\star B+C
$$
meaning that the symmetric difference $A\bigtriangleup (B+C)$ is finite, cf. \cite{Bien2022, MR58655}; equivalently, $A$ and $B+C$ belong to the same equivalence class in $\mathcal{P}(\mathbb{N})/\mathrm{Fin}$, where $\mathrm{Fin}$ stands for the family of finite subsets of $\mathbb{N}$. 
An old conjecture of Ostmann \cite[p. 13]{MR0268112}, which is still open, states that the set of primes is totally irreducible, cf. \cite{MR2271607, MR3378834} for partial results. 
See also \cite{Bien2022, Ruzsa2022} for sumsets of symmetric sets of integers $S\subseteq \mathbb{Z}$ and references therein. 

Let $\Sigma$ and $\Sigma_\star$ be the family of irreducible and totally irreducible sets, respectively, so that $\Sigma_\star\subseteq \Sigma$. 
Also, identify the family of infinite sets $S\subseteq \mathbb{N}$ with the set of reals in $(0,1]$ through their unique nonterminating dyadic expansions. Also, denote by $\lambda: \mathscr{M} \to \mathbb{R}$ the Lebesgue measure, where $\mathscr{M}$ stands for the completion of the Borel $\sigma$-algebra on $(0,1]$. Accordingly, a theorem of Wirsing \cite{MR58655} states that almost all infinite sets are totally irreducible, in the measure theoretic sense:
\begin{thm}\label{thm:wirsing}
$\lambda(\Sigma_\star)=1$. In particular, also $\lambda(\Sigma)=1$.
\end{thm}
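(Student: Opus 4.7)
The plan is to show $\lambda(\Sigma_\star^c) = 0$ by working with the Bernoulli $(1/2)^{\otimes \mathbb{N}}$ product measure $\mu$ on $\{0,1\}^{\mathbb{N}}$, which agrees with $\lambda$ under the stated identification. Equivalently, each $n \in \mathbb{N}$ lies in the random set $A$ independently with probability $1/2$, and the goal is $\mu(\Sigma_\star^c) = 0$. I would decompose $\Sigma_\star^c$ according to whether one of $B, C$ is finite (case I) or both are infinite (case II); the subcase where $B$ and $C$ are both finite is trivial since then $B+C$ is finite while a $\mu$-random $A$ is almost surely infinite.

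For case I, by symmetry and countability it suffices to show that for each finite $F \subseteq \mathbb{N}$ with $|F| \geq 2$,
\[
\mu\big(\{A : A =_\star F+C \text{ for some } C \subseteq \mathbb{N}\}\big) = 0.
\]
If $A =_\star F+C$ with $F = \{f_1 < \cdots < f_k\}$, then for every sufficiently large $n \in A$ there must exist $i \in [k]$ and $c \in C$ with $n = f_i + c$, which forces the translate $n + (F - f_i)$ to be contained in $A$. Call $n \in A$ \emph{bad} if no such $i$ exists. The event ``$n$ is bad'' depends only on $A \cap (n + (F-F))$; considering the realization $A \cap (n + (F-F)) = \{n\}$ shows that this event has $\mu$-probability at least $2^{-|F-F|} > 0$. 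Choosing $n_1 < n_2 < \cdots$ spaced farther apart than $\max(F-F) - \min(F-F)$ makes these events mutually independent, and the second Borel--Cantelli lemma then produces infinitely many bad $n \in A$ almost surely, contradicting $A =_\star F+C$. A countable union over finite $F$ with $|F| \geq 2$ disposes of case I.

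The main obstacle is case II, where both $B$ and $C$ are infinite, so no finite forbidden configuration is available and a single Borel--Cantelli application is not at hand. My approach would combine two ingredients. First, for every $b_1 < b_2 \in B$, the set $C \setminus \big((A-b_1) \cap (A-b_2)\big)$ is finite, and $(A-b_1) \cap (A-b_2)$ has $\mu$-density $1/4$; iterating with $k$ elements of $B$ forces $C$ to be supported on a set of density $2^{-k}$, and symmetrically for $B$. Second, an entropy/counting argument: for each $N$, bound the number of possible initial segments $(B+C) \cap [0,N)$ arising from pairs $(B,C)$ whose prefixes lie in these sparse supports, and show this count is $o(2^N)$; since $A \in \Sigma_\star^c$ forces $A \cap [0,N)$ to belong to this family for all sufficiently large $N$, this would yield $\mu(\Sigma_\star^c) = 0$. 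The delicate part is beating the trivial $4^N$ bound from the unconstrained choice of $B \cap [0,N)$ and $C \cap [0,N)$; a natural route is to condition on small prefixes of $B$ and $C$, apply the local argument from case I to the tails, and finally take a countable union over prefixes. This interplay between sparsity bounds on $B, C$ and the combinatorial rigidity of their sumset is where I expect the technical bulk of the proof to lie.
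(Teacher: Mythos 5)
Your plan tracks the paper's proof of the stronger Theorem~\ref{thm:mainmeasure} (which yields Theorem~\ref{thm:wirsing} since $\mathrm{Fin}\subseteq\mathcal{Z}$ implies $\Sigma(\mathcal{Z})\subseteq\Sigma_\star$) along the same three stages: eliminate finite $B$ or $C$, force $B,C$ to be sparse, and then count. The main differences are in the technical vehicle and in generality. For stage one, the paper restricts to the set $\widehat{\Omega}$ of normal numbers once and for all and invokes a single isolated point $a\in A$ with $A\cap[a-m,a+m]=\{a\}$, whereas you run a second Borel--Cantelli argument producing infinitely many bad points; both are fine, and yours avoids the explicit normality machinery. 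For stage two, your density computation $(A-b_1)\cap(A-b_2)$ having density $1/4$ almost surely, iterated over $k$ elements of $B$, is exactly the content of the paper's estimate $|C(n)|\le 2^{-k}n$ using normality; you are relying on the ergodic theorem for the block-factor sequence rather than Borel's normal-number theorem, again an acceptable substitute. For stage three, the paper's count $w_{n,k}^2$ (choices of $Y,Z$) times $w_{n,k}$ (perturbation) together with Stirling is the precise form of your proposed entropy bound, but note that because the paper proves the $\mathcal{Z}$-perturbation version, the finite symmetric difference must be allowed to grow like $n/k$ and is folded into the $w_{n,k}$ factor; for the $\mathrm{Fin}$ version you need only a countable union over finite symmetric differences, which is simpler. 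The one imprecise spot in your sketch is the phrase ``apply the local argument from case I to the tails'': the tails of $B,C$ are infinite, so the isolated-point argument does not transfer; what actually closes stage three is the raw binomial count $\binom{N}{N/2^k}^2$ of prefixes of $B,C$ being $o(2^N)$ for $k$ large, followed by Borel--Cantelli (or Fatou for $\liminf$, which suffices). With that clarification your route is sound and slightly more elementary, at the cost of proving only the $\mathrm{Fin}$ version rather than the $\mathcal{Z}$ version.
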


Results on the same spirit have been studied by S{\'{a}}rk{\"{o}}zy \cite{MR568278, MR568279}  for the Hausdorff dimension. Another related result has been recently obtained by Bienvenu and Geroldinger in \cite[Theorem 6.2]{Bien2022b}. Here, once we identify $\mathcal{P}(\mathbb{N})$ with the Cantor space $\{0,1\}^{\mathbb{N}}$, we show that the analogue of Theorem \ref{thm:wirsing} holds in the category sense, so that almost all sets are totally primitive also topologically (recall that a set is said to be comeager if its complement is of the first Baire category, namely, its complement is contained in a countable union of closed sets with empty interior): 
\begin{thm}\label{thm:meageranalogue}
$\Sigma_\star$ is a comeager subset of $\mathcal{P}(\mathbb{N})$. In particular, also $\Sigma$ is comeager.
\end{thm}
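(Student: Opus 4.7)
The plan is to identify $\mathcal{P}(\mathbb{N})$ with the Cantor space $\{0,1\}^{\mathbb{N}}$ and exhibit $\Sigma_\star^c$ as a countable union of nowhere dense closed sets. For each triple $(B_0, C_0, m)$, where $B_0, C_0 \subseteq \mathbb{N}$ are finite sets with $|B_0|, |C_0| \geq 2$ and $m \in \mathbb{N}$, define
$$
E_{B_0, C_0, m} := \{A \subseteq \mathbb{N} : \exists B \supseteq B_0,\ C \supseteq C_0 \text{ with } A \cap (m, \infty) = (B+C) \cap (m, \infty)\}.
$$
If $A =_\star B+C$ with $|B|, |C| \geq 2$, then picking two-element subsets $B_0 \subseteq B$ and $C_0 \subseteq C$ together with $m$ large enough that $A \bigtriangleup (B+C) \subseteq [0, m]$ places $A$ in $E_{B_0, C_0, m}$, so $\Sigma_\star^c$ lies in the countable union $\bigcup E_{B_0, C_0, m}$.

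Each $E_{B_0, C_0, m}$ is closed: given $A_n \to A$ with witnesses $(B_n, C_n)$, compactness of $\{0,1\}^{\mathbb{N}} \times \{0,1\}^{\mathbb{N}}$ yields a subsequence $(B_n, C_n) \to (B, C)$; the inclusions $B_0 \subseteq B$ and $C_0 \subseteq C$ survive in the limit because $B_0, C_0$ are finite, and $B_n + C_n \to B+C$ pointwise because $(B' + C') \cap [0, N]$ depends only on $B' \cap [0, N]$ and $C' \cap [0, N]$. The main content is showing that each $E_{B_0, C_0, m}$ has empty interior. Fix a basic cylinder $[\sigma]$ with $\sigma \in \{0,1\}^{N+1}$; after replacing $\sigma$ by an extension if needed, assume $N \geq m$. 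Pick an integer $k > 2N + \max B_0 + \max C_0$ with $k \notin B_0 + C_0$, and let $A$ extend $\sigma$ by $A \cap (N, \infty) = \{k\}$. If $A$ were in $E_{B_0, C_0, m}$ with witnesses $(B, C)$, then $k = b+c$ for some $b \in B$, $c \in C$. Because $k \notin B_0 + C_0$, we have $b \notin B_0$ or $c \notin C_0$; combined with $k > 2N + \max B_0 + \max C_0$, this forces $b > N$ or $c > N$. If $b > N$, then $b + C_0 \subseteq B+C$ lies in $(N, \infty)$ and has $|C_0| \geq 2$ elements, forcing $b + C_0 \subseteq A \cap (N, \infty) = \{k\}$, absurd; the case $c > N$ is symmetric using $B_0 + c$.

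The main obstacle is treating the case where both $B$ and $C$ are infinite, since a priori there are uncountably many such pairs. The reduction to the countable family of finite certificates $(B_0, C_0, m)$ is the key device: the hypothesis $|B_0|, |C_0| \geq 2$ forces any hypothetical sumset decomposition of $A$ to carry a two-element translate entirely above the threshold $N$, which the deliberately sparse $A$ (with only one element above $N$) cannot accommodate. Since $\Sigma_\star^c$ is meager, $\Sigma_\star$ is comeager, and the inclusion $\Sigma_\star \subseteq \Sigma$ yields that $\Sigma$ is comeager as well.
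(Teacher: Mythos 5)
Your proof is correct, and it takes a genuinely different (and more elementary) route than the paper. The paper does not prove Theorem~\ref{thm:meageranalogue} directly: it first establishes the stronger Theorem~\ref{thm:mainmeager} (comeagerness of $\Sigma(\mathcal{Z}_\alpha)$ for $\alpha<\tfrac13$) via the Banach--Mazur game, constructing a single ``diagonal'' set $A$ with blocks $[k_m+1,2k_m]$ followed by sparse arithmetic progressions, and then deduces Theorem~\ref{thm:meageranalogue} from the inclusion $\Sigma(\mathcal{Z}_{1/4})\subseteq\Sigma_\star$. You instead exhibit $\mathcal{P}(\mathbb{N})\setminus\Sigma_\star$ explicitly as the countable union $\bigcup_{(B_0,C_0,m)} E_{B_0,C_0,m}$ of closed, nowhere dense sets, the key device being the replacement of the uncountably many potential witnesses $(B,C)$ by countably many finite certificates $(B_0,C_0,m)$, and the nowhere-density being killed by forcing a translated pair $b+C_0$ (or $B_0+c$) of size $\geq 2$ into the singleton tail $A\cap(N,\infty)=\{k\}$. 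Both the closedness argument (compactness of $\{0,1\}^{\mathbb{N}}\times\{0,1\}^{\mathbb{N}}$ plus the fact that $(B+C)\cap[0,N]$ depends only on $B\cap[0,N]$ and $C\cap[0,N]$) and the empty-interior argument check out; note that in the latter, $k\notin B_0+C_0$ is automatic once $k>\max B_0+\max C_0$, and that $k>2N$ alone forces $b>N$ or $c>N$, so your appeal to $k\notin B_0+C_0$ there is redundant but harmless. What your approach buys is a self-contained, game-free proof with a transparent $F_\sigma$ decomposition; what it gives up is the quantitative strengthening to $\Sigma(\mathcal{Z}_\alpha)$, which is the paper's main point and requires the much more delicate counting inside a constructed $A$ that your single sparse witness does not provide.
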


Our Theorem \ref{thm:meageranalogue} will be obtained as a consequence of a stronger result, which need some additional notation. 
\begin{defi}
Given a family $\mathcal{I}\subseteq \mathcal{P}(\mathbb{N})$ closed under finite unions and subsets, a set $A\subseteq \mathbb{N}$ is said $\mathcal{I}$\emph{-irreducible} if there are no $B,C\subseteq \mathbb{N}$ such that $|B|,|C|\ge 2$ and 
$$
A=_{\mathcal{I}}B+C,
$$
meaning that the symmetric difference $A\bigtriangleup (B+C) \in \mathcal{I}$. The family of $\mathcal{I}$-irreducible sets is denoted by $\Sigma(\mathcal{I})$.
\end{defi}

Note that $\emptyset$-irreducible and $\mathrm{Fin}$-irreducible are simply the classical irreducible and totally irreducible sets, respectively; 
similarly, also $\Sigma(\emptyset)=\Sigma$ and $\Sigma(\mathrm{Fin})=\Sigma_\star$. 
It is clear that $\Sigma(\mathcal{I}) \subseteq \Sigma(\mathcal{J})$ whenever $\mathcal{J}\subseteq \mathcal{I}$. Given $\alpha \in (0,1]$, define the family 
$$
\mathcal{Z}_\alpha:=\left\{S\subseteq \mathbb{N}: \lim_{n\to \infty} \frac{|S \cap [0,n]|}{n^\alpha}=0\right\}.
$$
Hereafter, we will use the shorter notations $S(n):=S\cap [0,n]$ and $\mathcal{Z}:=\mathcal{Z}_1$. 
Of course, $\mathcal{Z}$ is simply the zero set of the upper asymptotic density on $\mathbb{N}$, cf. \cite{MR4054777}. 
Structural properties of the families $\mathcal{Z}_\alpha$ are studied, e.g., in \cite{MR3391516, MR3771234, MR3950052}. 
With this notation, Erd{\H{o}}s conjectured that the set $Q:=\{n^2: n \in \mathbb{N}\}$ of nonnegative squares belongs to $\Sigma(\mathcal{Z}_{1/2})$. 
Then, S{\'{a}}rk{\"{o}}zy and Szemer\'{e}di proved in \cite{MR201410} a slightly weaker statement, namely, $Q \in \Sigma(\mathcal{Z}_\alpha)$ for all $\alpha \in (0,\frac{1}{2})$.

With the same spirit of Erd{\H{o}}s' conjecture, S{\'{a}}rk{\"{o}}zy and Szemer\'{e}di's result, and the claimed analogue stated in Theorem \ref{thm:meageranalogue}, we show that, from a topological point of view, almost all sets belong to $\mathcal{Z}_{\alpha}$, provided that $\alpha$ is sufficiently small: 
\begin{thm}\label{thm:mainmeager}
$\Sigma(\mathcal{Z}_{\alpha})$ is a comeager subset of $\mathcal{P}(\mathbb{N})$ for each $\alpha \in (0,\frac{1}{3})$. 
\end{thm}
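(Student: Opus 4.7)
The plan is to prove that $\mathcal{P}(\mathbb{N}) \setminus \Sigma(\mathcal{Z}_\alpha)$ is meager by a countable parametrization of sumset witnesses followed by a Baire-category genericity argument. For every quadruple $(b_1, b_2, c_1, c_2) \in \mathbb{N}^4$ with $b_1 < b_2$ and $c_1 < c_2$, set
\[
T_{b_1, b_2, c_1, c_2} := \bigl\{A \subseteq \mathbb{N} : \exists\, B \supseteq \{b_1, b_2\},\ \exists\, C \supseteq \{c_1, c_2\},\ A \bigtriangleup (B+C) \in \mathcal{Z}_\alpha \bigr\}.
\]
Every $A \notin \Sigma(\mathcal{Z}_\alpha)$ lies in some $T_{b_1, b_2, c_1, c_2}$ (pick any two elements from each of the witnessing $B, C$), so the complement of $\Sigma(\mathcal{Z}_\alpha)$ is the countable union of these sets; it suffices to show that each is meager.

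Fix such a quadruple, write $d := b_2 - b_1$, $e := c_2 - c_1$, and, for a set $A \subseteq \mathbb{N}$, let $P_d(m) := |A \cap (A - d) \cap [0, m]|$ denote the pair-counting function at distance $d$ in $[0, m]$. Suppose $A \in T_{b_1, b_2, c_1, c_2}$ with witnesses $B, C$. For every $b \in B$ both $b + c_1, b + c_2 \in B + C$; the hypothesis $A \bigtriangleup (B + C) \in \mathcal{Z}_\alpha$ then forces all but $o(n^\alpha)$ of the $b \in B(n)$ to satisfy $b + c_1, b + c_2 \in A$ as well. Via the substitution $y = b + c_1$ this yields $|B(n)| \leq P_e(n + c_1) + o(n^\alpha)$, and symmetrically $|C(n)| \leq P_d(n + b_1) + o(n^\alpha)$; combined with $|A(n)| \leq |(B + C)(n)| + o(n^\alpha) \leq |B(n)|\cdot|C(n)| + o(n^\alpha)$ this gives the key inequality
\begin{equation}\label{eq:plan-main}
|A(n)| \leq P_d(n + b_1) \cdot P_e(n + c_1) + o(n^\alpha)\bigl(P_d(n + b_1) + P_e(n + c_1)\bigr) + o(n^{2\alpha})
\end{equation}
for all sufficiently large $n$.

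The remaining step is to exhibit a comeager family of $A$ for which \eqref{eq:plan-main} fails along an infinite subsequence of $n$; by the above this forces such $A$ to lie outside every $T_{b_1, b_2, c_1, c_2}$. Set $D := \max(d, e) + 1$. Given a basic open neighborhood determined by a prefix $A \cap [0, M] = s$, extend $A$ by taking $A \cap [M + 1, \infty)$ to consist exactly of the arithmetic progression $\{M + 1, M + 1 + D, M + 1 + 2D, \ldots\}$. No two elements of this AP differ by $d$ or $e$, so every pair of elements of $A$ at distance $d$ or $e$ has at least one endpoint in $s$; consequently $\max\{P_d(n + b_1), P_e(n + c_1)\} \leq C_0$ for a constant $C_0 = O(M^2)$ independent of $n$, while the AP itself contributes $(n - M)/D \geq n/(2D)$ elements whenever $n \geq 2M$. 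The open set of $A$'s satisfying both bounds for some $n > N$ is therefore dense for every $N$, yielding a dense $G_\delta$, and intersecting these sets over the countably many quadruples yields the desired comeager set. For $A$ in this intersection and $\alpha < 1/3$: along the corresponding infinite sequence the right-hand side of \eqref{eq:plan-main} is at most $C_0^2 + 2 C_0 \cdot o(n^\alpha) + o(n^{2\alpha})$, which is $o(n^{2/3})$ since $2\alpha < 2/3$, whereas the left-hand side is at least $n/(2D)$; this contradicts \eqref{eq:plan-main} for $n$ sufficiently large, so $A$ lies outside $T_{b_1, b_2, c_1, c_2}$, completing the proof. The main technical point is the choice of AP step $D > \max(d, e)$, which simultaneously kills pairs at distances $d$ and $e$ and keeps $A$ linearly dense; the $\alpha < 1/3$ hypothesis enters only through the requirement that $o(n^{2\alpha})$ is eventually beaten by $n/(2D)$.
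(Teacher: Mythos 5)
Your approach is correct in essence and genuinely different from the paper's. The paper proves comeagerness via the Banach--Mazur game: Player II's strategy inserts, at each stage $m$, a solid block $(k_m, 2k_m]$, then an arithmetic progression of length $t_m = \lfloor k_m^\beta\rfloor$ and gap $t_m$, then a long gap; the block forces $\max(|B(2k_m)|,|C(2k_m)|)\geq\sqrt{k_m/2}$, and a two-case analysis (depending on whether some $c_i > 2k_m$) eventually contradicts the $\mathcal{Z}_\alpha$-bound on $A\bigtriangleup A'$. The interplay between the two cases is exactly what forces the twin constraints $\beta > 3/4$ and $\alpha\beta < 1/4$, hence $\alpha < 1/3$. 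You instead sidestep the game and the block entirely: you parametrize witnesses by a quadruple $(b_1,b_2,c_1,c_2)$, bound $|B(n)|$ and $|C(n)|$ from above by the \emph{pair-counting} functions $P_e, P_d$ of $A$ (a genuinely different idea, since the paper never looks at pairs at a fixed distance), and then observe that a tail consisting of a single AP of step $D > \max(d,e)$ makes $P_d, P_e$ bounded while $|A(n)|$ grows linearly. This is conceptually simpler, avoids the case split, and, notably, the exponent constraint you actually use is $2\alpha < 1$, not $2\alpha < 2/3$; so your argument, once made precise, in fact gives comeagerness of $\Sigma(\mathcal{Z}_\alpha)$ for all $\alpha < 1/2$, which is stronger than the theorem and answers part of the paper's open question in Section~3.

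One point in the write-up does need to be tightened. You describe the dense open sets as ``the open set of $A$'s satisfying both bounds for some $n>N$,'' where the bounds are $P_d, P_e \leq C_0$ and $|A(n)| \geq n/(2D)$. But $C_0$ depends on the prefix length $M$ used in the density argument, so this does not define a single open set. The fix is routine: replace the $o$-terms in \eqref{eq:plan-main} by concrete comparison functions and set, for instance,
$$
W_{q,N} := \bigl\{A : \exists\, n > N,\ |A(n)| > \bigl(P_d(n+b_1) + n^{\alpha}\bigr)\bigl(P_e(n+c_1) + n^{\alpha}\bigr) + n^{\alpha}\bigr\}.
$$
This is open (determined by $A$ on a finite window), dense (your AP extension makes the right side $O(n^{2\alpha})$ while $|A(n)| \gtrsim n/D$, and $2\alpha < 1$), and if $A \in T_q$ then for all large $n$ the error terms satisfy $|(A\bigtriangleup(B+C))(n+c_2)|, |(A\bigtriangleup(B+C))(n+b_2)| < \tfrac12 n^\alpha$, so $|A(n)| \leq (P_e + n^\alpha)(P_d + n^\alpha) + n^\alpha$, giving $A\notin W_{q,N}$ for $N$ large. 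With that reformulation your proof is complete. A minor slip: your constant $C_0$ is actually $O(M)$, not $O(M^2)$ (each pair at distance $d$ or $e$ is forced to have its smaller endpoint in $s$), but this has no effect on the argument.
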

Observe that Theorem \ref{thm:meageranalogue} is now immediate since $\Sigma(\mathcal{Z}_{1/4})\subseteq \Sigma_\star$. 
Results in the same spirit, but completely different contexts, appeared, e.g., in \cite{
AveniLeo22, 
MR4453356, 
Leo19, 
LeoTAUB, 
LeoAmir22}. 

In addition, we prove that the measure analogue  Theorem \ref{thm:mainmeager} holds, hence providing a generalization of Wirsing's Theorem \ref{thm:wirsing}:
\begin{thm}\label{thm:mainmeasure}
$\lambda(\Sigma(\mathcal{Z}))=1$.
\end{thm}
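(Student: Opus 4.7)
The plan is to identify $\mathcal{P}(\mathbb{N})$ with $(0,1]$ via nonterminating dyadic expansions, so that $\lambda$ becomes the Bernoulli$(1/2)$ product measure; equivalently, a $\lambda$-random $A$ contains each nonnegative integer independently with probability $1/2$. I work on the full-measure event where, by the strong law of large numbers applied countably often, $A$ has density $1/2$ and, for every finite $F \subseteq \mathbb{N}$, the set $\bigcap_{f \in F}(A - f)$ has density $2^{-|F|}$.

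Suppose for contradiction that $A =_{\mathcal{Z}} B + C$ for some $B, C \subseteq \mathbb{N}$ with $|B|, |C| \ge 2$. For $b_1 < \cdots < b_k$ in $B$ with $d_i := b_i - b_1$, each $c \in C$ gives $b_i + c = b_1 + c + d_i \in B+C$, so $b_1 + C \subseteq \bigcap_{i \le k}(B+C - d_i)$; together with $B+C =_{\mathcal{Z}} A$, the density hypothesis yields $\bar d(C) \le 2^{-k}$, and symmetrically $\bar d(B) \le 2^{-|C|}$. Since $B+C = \bigcup_{b \in B}(b + C)$, finite $|B| = k$ implies $\bar d(B+C) \le k\,\bar d(C)$; combined with $\bar d(B+C) = \bar d(A) = 1/2$ this forces $\bar d(C) \ge 1/(2k)$, so $2^k \le 2k$ and $k = 2$. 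Since $|B|, |C|$ both finite is incompatible with $\bar d(A) = 1/2$, only three cases remain: (I) $|B| = 2$, $|C| = \infty$; (II) the mirror with $|C| = 2$; (III) $|B|, |C|$ both infinite.

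In Case I, write $B = \{b_1, b_1 + d\}$. The bounds are tight, so $\bar d(C) = 1/4$. Expanding $|(B+C) \cap [0,n]| = 2|C \cap [0,n]| - |(C \cap (C+d)) \cap [0,n]| + O(1)$ and using $|(B+C) \cap [0,n]| = n/2 + o(n)$ forces the actual density $d(C) = 1/4$ to exist and $d(C \cap (C+d)) = 0$. Since $b_1 + C \subseteq A \cap (A - d)$ up to a density-zero error and both sets have density $1/4$, one deduces $b_1 + C =_{\mathcal{Z}} A \cap (A - d)$. Substituting back gives
\[
A \;=_{\mathcal{Z}}\; \bigl(A \cap (A-d)\bigr) \cup \bigl(A \cap (A+d)\bigr),
\]
equivalently $\bar d(\{n \in A : n - d \notin A, \, n + d \notin A\}) = 0$; but by the density hypothesis this value is almost surely $1/8$, a contradiction. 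A countable union over $d \ge 1$ settles Case I, and Case II is symmetric. In Case III, letting $k \to \infty$ gives $\bar d(B) = \bar d(C) = 0$. Fix $\delta, \epsilon > 0$ with $2H(\delta) + H(\epsilon) < 1$, where $H$ is binary entropy, and let $E_N$ be the event that some $B', C' \subseteq [0,N]$ with $2 \le |B'|, |C'| \le \delta N$ satisfy $|A \cap [0,N] \bigtriangleup (B' + C') \cap [0,N]| \le \epsilon N$. A union bound over $\binom{N+1}{\le \delta N}^2 \le 2^{2H(\delta)(N+1)}$ pairs, combined with the Chernoff bound $2^{(H(\epsilon)-1)(N+1)}$ for the Hamming distance, gives $\Pr(E_N) \le 2^{(2H(\delta) + H(\epsilon) - 1)(N+1)}$, which is summable. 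Borel--Cantelli then yields that $E_N$ fails for all large $N$ almost surely; but the Case III hypothesis forces $E_N$ to hold for all sufficiently large $N$, a contradiction.

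The main anticipated obstacle is the density-upgrade step in Case I: passing from $b_1 + C \subseteq A \cap (A - d)$ (modulo density zero) to the symmetric-difference identity $b_1 + C =_{\mathcal{Z}} A \cap (A - d)$ requires establishing that the actual density of $C$ exists and equals $1/4$, which needs the precise asymptotics of $|(B+C) \cap [0,n]|$ extracted from $A =_{\mathcal{Z}} B+C$. The Borel--Cantelli bookkeeping in Case III is standard once $\bar d(B) = \bar d(C) = 0$ is in hand.
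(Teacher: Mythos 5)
Your proposal is correct and shares the core strategy with the paper's proof: work on an almost-sure ``normality'' event (your density hypothesis is exactly normality of the dyadic expansion), use it to force any candidate factors $B,C$ to be very sparse, and finish with an entropy/union-bound count plus Borel--Cantelli. In particular, your derivation $\bar d(C)\le 2^{-k}$ from $b_1+C\subseteq\bigcap_{i\le k}(B+C-d_i)=_{\mathcal{Z}}\bigcap_{i\le k}(A-d_i)$ is a compact variant of the paper's argument (which bounds $|C(n)|$ by counting occurrences of blocks of $A$ containing the pattern $\{b_0,\dots,b_k\}$), and your Case III is essentially the paper's Stirling-based count of $\mathcal{E}_n$ expressed with binary entropy.

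The one structural difference is Cases I and II. The paper does not need them: it shows directly that if $A$ is normal and $A=B+C$ then both $B,C$ must be infinite, via an isolated-point argument (take $m=1+\max B$; normality gives some $a\in A$ with $A\cap[a-m,a+m]=\{a\}$, yet $a=b+c$ and any $b'\in B\setminus\{b\}$ puts $b'+c$ in that window), and then transfers this to the $=_{\mathcal{Z}}$ setting by observing that normality is preserved under $\mathcal{Z}$-perturbations. Your Case I route works, but it is substantially heavier than needed: you pass through the asymptotics $|(B+C)(n)|=2|C(n)|-|(C\cap(C+d))(n)|+O(1)$, upgrade $\bar d(C)=1/4$ to $d(C)=1/4$, and prove the $\mathcal{Z}$-identity $b_1+C=_{\mathcal{Z}}A\cap(A-d)$, all to conclude that $A\cap(A-d)^c\cap(A+d)^c\in\mathcal{Z}$ contradicts its density $1/8$. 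The same contradiction is available more cheaply: for $B=\{b_1,b_1+d\}$, every $a\in B+C$ has $a-d\in B+C$ or $a+d\in B+C$, while normality forces $\{a\in A: a\pm d\notin A\}$ to have positive density (indeed $1/8$), and this set is contained in $A\bigtriangleup(B+C)$ --- which is exactly the isolated-point idea. So your proof is valid, but Cases I--II could be collapsed into the paper's shorter lemma; this would also spare you the delicate density-upgrade step you flagged as the main obstacle.

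Minor points worth tightening if you write this up: in the density hypothesis, stating $F\subseteq\mathbb{N}$ is fine (by translation invariance the density of $A\cap(A-d)^c\cap(A+d)^c$ reduces, via inclusion--exclusion, to patterns with $F\subseteq\{0,d,2d\}$), but it is clearest to phrase it as equidistribution of all finite $0/1$-blocks; and in Case III one should note explicitly that $(B(N)+C(N))\cap[0,N]=(B+C)\cap[0,N]$ so that the truncations $B'=B(N),C'=C(N)$ really do witness $E_N$.
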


It is remarkable that S{\'{a}}rk{\"{o}}zy and Szemer\'{e}di proved in \cite{MR201410} a general criterion for a set $S\subseteq \mathbb{N}$ and all its small perturbations to be totally irreducible.  
However, the hypotheses of such result do not seem to apply in our case for a proof of Theorem \ref{thm:mainmeasure}.

Before we proceed to the proofs of Theorem \ref{thm:mainmeager} and Theorem \ref{thm:mainmeasure}, some remarks are in order. 
First, suppose that the family $\mathcal{I}$ contains the finite sets $\mathrm{Fin}$.  Since $\{0,1\}+\{0,1\}=\{0,1,2\}=_{\mathcal{I}} A$ for all $A \in \mathcal{I}$ then $\mathcal{I} \cap \Sigma(\mathcal{I})=\emptyset$. In particular, if $\mathcal{I}$ is a maximal ideal, that is, the complement of a free ultrafilter on $\mathbb{N}$, then $\mathcal{I}$ is not a meager subset of $\mathcal{P}(\mathbb{N})$, hence $\Sigma(\mathcal{I})$ is not comeager. 
However, the families $\mathcal{Z}_\alpha$ are $F_{\sigma\delta}$-subsets of $\mathcal{P}(\mathbb{N})$ for each $\alpha \in (0,1]$, hence they are meager, cf. \cite[Proposition 1.1]{MR3391516}. 

Second, S{\'{a}}rk{\"{o}}zy proved in \cite{MR146164} that there exists a constant $c>0$ such that, if $A\subseteq \mathbb{N}$ is infinite, then there exist a totally irreducible $B\in \Sigma_\star$ and $n_0 \in \mathbb{N}$ such that 
$$
\forall n\ge n_0, \quad |(A\bigtriangleup B)(n)| \le c \frac{|A(n)|}{\sqrt{\log \log  |A(n)|}}.
$$
In particular, since the function $t\mapsto t/\sqrt{\log\log t}$ 
is definitively increasing, it follows that $A=_{\mathcal{Z}}B$, therefore every equivalence class of $\mathcal{P}(\mathbb{N})/\mathcal{Z}$ contains a totally irreducible set.

Lastly, solving another conjecture of Erd{\H{o}}s, it has been shown in \cite{MR4042162, MR3919363} that, if $A\subseteq \mathbb{N}$ has positive upper asymptotic density, i.e., $A \notin \mathcal{Z}$, then there exist two infinite sets $B,C \subseteq \mathbb{N}$ such that $B+C\subseteq A$. Hence, even in the optimistic case that one could prove the comeagerness of $\Sigma(\mathcal{Z})$, cf. Section \ref{sec:conclusions}, there is no hope to show that also the smaller family of sets $A\subseteq \mathbb{N}$ such that, if $A^\prime=_{\mathcal{Z}}A$, then $A^\prime$ does not contain a nontrivial sumset is comeager. (Note that the same remark holds also in the measure sense: indeed, since almost all numbers are normal, then almost all subsets of $\mathbb{N}$ of them have asymptotic density $\frac{1}{2}$, hence almost all of them contains a sumset between two infinite sets.)

We conclude with an easy consequence of Theorem \ref{thm:meageranalogue}: 
\begin{cor}\label{cor:consequenceABmeager}
Fix two families $\mathcal{A}, \mathcal{B} \subseteq \mathcal{P}(\mathbb{N})$ containing $\{0\}$. Then 
$$
\mathcal{A}+\mathcal{B}:=\{A+B: A \in \mathcal{A}, B \in \mathcal{B}\}
$$
is meager if and only if both $\mathcal{A}$ and $\mathcal{B}$ are meager.
\end{cor}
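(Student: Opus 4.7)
My plan is to treat the two implications separately: the forward one is straightforward, and the backward one should reduce to Theorem~\ref{thm:meageranalogue} together with a short case analysis on the cardinalities of the summands.

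For the forward implication I would argue that, since $\{0\}\in \mathcal{B}$, one has $A=A+\{0\}\in \mathcal{A}+\mathcal{B}$ for every $A\in \mathcal{A}$, so $\mathcal{A}\subseteq \mathcal{A}+\mathcal{B}$; by symmetry also $\mathcal{B}\subseteq \mathcal{A}+\mathcal{B}$. Meagerness of the sumset family then forces meagerness of both $\mathcal{A}$ and $\mathcal{B}$.

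For the backward implication, assuming both $\mathcal{A}$ and $\mathcal{B}$ are meager, I would split $\mathcal{A}+\mathcal{B}$ according to the cardinalities of the summands. The part coming from pairs $(A,B)$ with $|A|,|B|\ge 2$ lies inside $\mathcal{P}(\mathbb{N})\setminus \Sigma$, which is meager by Theorem~\ref{thm:meageranalogue}. The part coming from a singleton $A=\{a\}\in \mathcal{A}$ equals the translate $a+\mathcal{B}:=\{a+B:B\in \mathcal{B}\}$; I would handle this via the shift $\tau_a\colon \mathcal{P}(\mathbb{N})\to \mathcal{P}(\mathbb{N})$, $S\mapsto a+S$, which is a homeomorphism onto the clopen subspace $\{S\subseteq \mathbb{N}:S\cap [0,a-1]=\emptyset\}$, so it sends nowhere dense sets to nowhere dense sets of the ambient Cantor space; hence $a+\mathcal{B}$ is meager, and the union over the (at most countably many) relevant $a$ stays meager. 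The case of a singleton $B=\{b\}\in \mathcal{B}$ is symmetric, while the degenerate case $A=\emptyset$ or $B=\emptyset$ contributes only the single set $\emptyset$, which is nowhere dense since $\mathcal{P}(\mathbb{N})$ has no isolated points. Assembling these countably many meager pieces would give meagerness of $\mathcal{A}+\mathcal{B}$.

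The only genuinely nontrivial step is the invocation of Theorem~\ref{thm:meageranalogue}, which handles the case of two large summands; everything else is elementary bookkeeping. The main (and quite small) obstacle is verifying that the translate $a+\mathcal{B}$ is meager whenever $\mathcal{B}$ is, which I would resolve by noting that $\tau_a$ is a homeomorphism onto a clopen subset of $\{0,1\}^{\mathbb{N}}$, so that nowhere dense subsets of the image remain nowhere dense in the ambient space.
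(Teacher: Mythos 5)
Your proof is correct and follows essentially the same route as the paper: the forward direction via $\mathcal{A}+\{\{0\}\}=\mathcal{A}$, and the backward direction by decomposing $\mathcal{A}+\mathcal{B}$ into translates of $\mathcal{A}$ and $\mathcal{B}$ by singletons together with the piece coming from two large summands, which lands in $\mathcal{P}(\mathbb{N})\setminus\Sigma$ and is meager by Theorem~\ref{thm:meageranalogue}. You merely spell out two small details the paper leaves implicit (why a translate of a meager set is meager, and the $A=\emptyset$ or $B=\emptyset$ case), but the decomposition and the key invocation are identical.
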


\section{Proofs}

\begin{proof}
[Proof of Theorem \ref{thm:mainmeager}] 
Fix $\alpha \in (0,\frac{1}{3})$. 
We are going to use the Banach--Mazur game defined as follows, see \cite[Theorem 8.33]{MR1321597}\textup{:} 
Players I and II choose alternatively nonempty open subsets of $\{0,1\}^{\mathbb{N}}$ as a nonincreasing chain 
$$
U_0\supseteq V_0 \supseteq U_1 \supseteq V_1\supseteq \cdots, 
$$
where Player I chooses the sets $U_0,U_1,\ldots$; Player II is declared to be the winner of the game if 
\begin{equation}\label{eq:claimgame}
\bigcap\nolimits_{m\ge 0} V_m \cap \, \Sigma(\mathcal{Z}_{\alpha})\neq \emptyset. 
\end{equation}
Then Player II has a winning strategy (that is, he is always able to choose suitable sets $V_0, V_1,\ldots$ so that \eqref{eq:claimgame} holds at the end of the game) if and only if $\Sigma(\mathcal{Z}_{\alpha})$ is a comeager set in the Cantor space $\{0,1\}^{\mathbb{N}}$.  
Since $\mathcal{P}(\mathbb{N})$ is identified with $\{0,1\}^{\mathbb{N}}$, 
a basic open set in $\mathcal{P}(\mathbb{N})$ will be a cylinder of the type $\{A\subseteq \mathbb{N}: A(k)=F\}$ for some integer $k \in \mathbb{N}$ and some (possibly empty) finite set $F\subseteq [0,k]$.

At this point, we define define the strategy of player II recursively as it follows. Suppose that the nonempty open sets $U_0\supseteq V_0 \supseteq \cdots \supseteq U_{m}$ have been already chosen, for some $m \in \mathbb{N}$. Then there exists a finite set $F_m \in \mathrm{Fin}$ and an integer $k_m \ge \max (F_m\cup \{0\})$ such that 
$$
\forall A\subseteq \mathbb{N}, \quad 
A(k_m)=F_m \implies A \in U_m.
$$
Without loss of generality we can assume that $|F_m|\ge 2$. 
Using the continuity of the map $\beta \mapsto \alpha \beta$ and recalling that $\alpha<\frac{1}{3}$, we can fix a real $\beta \in (\frac{3}{4},1)$ such that $\alpha \beta <\frac{1}{4}$.

Thus, define $t_m:=\lfloor k_m^\beta\rfloor$ and 
$$
V_m:=\left\{A \subseteq \mathbb{N}: A(7k_m+t_m^2)=F_m \cup (k_m,2k_m] \cup \bigcup_{i=1}^{t_m}{\{5k_m+it_m\}}\right\}.
$$
In other words, a set $A\in U_m$ belongs to $V_m$ if it contains the block of integers $[k_m+1,2k_m]$ and, then, it is followed by an arithmetic progression of $t_m$ elements and distance $t_m$; note that each $A \in V_m$ ends with a further gap of lenght $2k_m$. 

Hence, by construction, $V_m$ is a nonempty open set contained in $U_m$. 
Finally, observe that there exists a unique set $A\subseteq \mathbb{N}$ such that 
$$
\{A\}=\bigcap\nolimits_{m\ge 0}V_m.
$$
Indeed, since the sequence $(k_m)_{m\ge 0}$ is strictly increasing, the definition of the sets $V_m$ gives us, in particular, all the finite truncations $A(k_m)$.

To complete the proof, we have to show that $A\in \Sigma(\mathcal{Z}_{\alpha})$. 
For, let us suppose for the sake of contradiction that there exist $A^\prime, B,C\subseteq \mathbb{N}$ such that $|B|,|C|\ge 2$ and
$$
A=_{\mathcal{Z}_{\alpha}}A^\prime=B+C.
$$
Let $m$ be a sufficiently large integer 
with the property that $|(A\bigtriangleup A^\prime)(n)|\le \frac{1}{2}n^\alpha$ for all $n\ge 2k_m$, which is possible since $A\bigtriangleup A^\prime \in \mathcal{Z}_{\alpha}$ (further properties will be specified in the course of the proof recalling simply that "$m$ is large"). 
Since $(k_m,2k_m]\subseteq A$ by construction, then 
\begin{displaymath}
\begin{split}
|A^\prime(2k_m)| 
&\ge \left|A\left(2k_m-\frac{1}{2}(2k_m)^\alpha\,\right)\right|\\
&\ge \left|(A \cap (k_m,2k_m-\frac{k_m^\alpha}{2^{1-\alpha}}\,]\right|
=\left\lfloor k_m\left(1-\frac{1}{(2k_m)^{1-\alpha}}\right)\right\rfloor 
\ge \frac{k_m}{2},
\end{split}
\end{displaymath}
where the last inequality holds since $m$ is large. 
On the other hand, $A^\prime(n)$ is contained in $B(n)+C(n)$ for all $n \in \mathbb{N}$, so that 
$$
|A^\prime(2k_m)| \le |B(2k_m)| \cdot |C(2k_m)|,
$$
which implies that 
$$
\max\left\{|B(2k_m)|, |C(2k_m)|\right\}\ge 
\sqrt{\frac{k_m}{2}}.
$$
Up to relabeling of the sets $B$ and $C$, we can assume without loss of generality that 
$|B(2k_m)|\ge |C(2k_m)|$. 

Now, observe that, since $m$ is large, 
\begin{equation}\label{eq:contrafinally}
|(A\bigtriangleup A^\prime)(7k_m+t_m^2)|\le \frac{1}{2}(7k_m+t_m^2)^\alpha 
\le t_m^{2\alpha}, 
\end{equation}
which is smaller than $\frac{1}{2}t_m$. 
This implies that there exists a subset $S_m\subseteq \{1,\ldots,t_m\}$ such that $|S_m|\ge \frac{1}{2}t_m$ and
$$
\{5k_m+it_m: i \in S_m\}\subseteq A^\prime.
$$
Hence, for each $i \in S_m$ there exist $b_i \in B$ and $c_i \in C$ such that $5k_m+it_m=b_i+c_i$. Therefore
$$
\forall i \in S_m, \quad 
\max\{b_i,c_i\} \ge \frac{5k_m+it_m}{2} > 2k_m.
$$

At this point, let us suppose that 
there exists $i \in S_m$ such that 
$c_i>2k_m$. 
It follows that 
$$
B(2k_m)+\{c_i\}\subseteq (B+C)\cap (2k_m, 7k_m+it_m]
\subseteq A^\prime \cap (2k_m,7k_m+t_m^2],
$$
so that, since $m$ is large, we have 
\begin{displaymath}
\begin{split}
|(A\bigtriangleup A^\prime)(7k_m+t_m^2)| &\ge |B(2k_m)|-\frac{2k_m}{t_m} \\
&\ge \sqrt{\frac{k_m}{2}}-\sqrt{\frac{k_m}{8}} = \sqrt{\frac{k_m}{8}}.
\end{split}
\end{displaymath}
However, this contradicts \eqref{eq:contrafinally}: indeed, since $2\alpha\beta<\frac{1}{2}$ and $m$ is large, we have 
\begin{equation}\label{eq:finalcontraaaaa}
|(A\bigtriangleup A^\prime)(7k_m+t_m^2)|\le t_m^{2\alpha} \le k_m^{2\alpha\beta} \le \sqrt{\frac{k_m}{16}}.
\end{equation}

This means that $c_i\le 2k_m<b_i$ for all $i \in S_m$. 
Let $i,j \in [1, t_m]$ such that $i-j \ge 4k_m^{1-\beta}$. 
Since $m$ is large, then 
\begin{displaymath}
\begin{split}
b_i-b_j=(i-j)t_m-c_i+c_j &\ge (i-j)t_m-2k_m \\
&\ge 4k_m^{1-\beta}t_m-2k_m \ge 3k_m-2k_m=k_m. 
\end{split}
\end{displaymath}
Hence there exist integers $1=i_1<i_2<\cdots<i_{q_m}\le t_m$ such that 
$$
\forall j=1,\ldots,q_m-1,\quad i_{j+1}-i_j \ge 4k_m^{1-\beta} 
\quad \text{ and }\quad 
b_{i_{j+1}}-b_{i_j} \ge k_m
$$
and, since $m$ is large, 
$$
q_m \ge \frac{t_m}{5k_m^{1-\beta}}\ge \frac{1}{6}k_m^{2\beta-1}.
$$

Let us call $c^\prime:=\min C$ and $c^{\prime\prime}:=\min C\setminus \{c^\prime\}$. Since $m$ is large, we can assume that $t_m \ge 2c^{\prime\prime}$. It follows that 
$$
b_{i_1}+c^\prime<b_{i_1}+c^{\prime\prime}<b_{i_2}+c^\prime<b_{i_2}+c^{\prime\prime}<\cdots<b_{i_{q_m}}+c^\prime<b_{i_{q_m}}+c^{\prime\prime}.
$$
Therefore we have $2q_m$ distinct elements in $(B+C) \cap (2k_m, 7k_m+t_m^2)=A^\prime \cap (2k_m, 7k_m+t_m^2)$ and 
at most half of them are nor equal to any $5k_m+ht_m$, $1\le h\le t_m$. Indeed since $c^{\prime\prime}-c^\prime \le \frac{1}{2}t_m$, $b_{i_j}+c^{\prime}$ and $b_{i_j}+c^{\prime\prime}$ cannot be together written as $5k_m+h^\prime t_m$ and $5k_m+h^{\prime\prime}t_m$, respectively. 
It follows that 
$$
|(A\bigtriangleup A^\prime)(7k_m+t_m^2)| \ge q_m \ge \frac{1}{6}k_m^{2\beta-1}
$$
which contradicts again \eqref{eq:finalcontraaaaa}, since $m$ is large and $2\beta-1>\frac{1}{2}$.
\end{proof}

\begin{proof}
[Proof of Theorem \ref{thm:mainmeasure}] 
Hereafter, denote explicitly by $h: \mathrm{Fin}^+\to (0,1]$ the bijection between the family 
$\mathrm{Fin}^+:=\mathcal{P}(\mathbb{N})\setminus \mathrm{Fin}$ of all infinite subsets of $\mathbb{N}$ and the set of reals in $(0,1]$ through their unique nonterminating dyadic expansions. 
Also, let $\Omega$ be the set of normal numbers in $(0,1]$.  
It follows by Borel's normal number theorem that $\Omega \in \mathscr{M}$ and $\lambda(\Omega)=1$, see e.g. \cite[Theorem 1.2]{MR1324786}. Observe that, if a set $A$ belongs to $\widehat{\Omega}:=h^{-1}[\Omega]$, then, by the definition of normal numbers, 
\begin{equation}\label{eq:kdsjhgkhf}
|\{j \in [0,n]: A(n+|F|)\cap (I+j)=F+j\}|=2^{-|I|}n+o(n)
\end{equation}
as $n\to \infty$, for all nonempty finite sets $F\subseteq I$ such that $I\subseteq \mathbb{N}$ is an interval containing $0$. 
Then the claim can be rewritten equivalently as
$$
\textstyle 
\lambda\left(h\left[\widehat{\Omega}\setminus \Sigma(\mathcal{Z})\right]\right)=0,
$$
and note that, by definition, 
$$
\widehat{\Omega}\setminus \Sigma(\mathcal{Z})=\left\{A\in \widehat{\Omega}: \exists A^\prime, B,C\subseteq \mathbb{N}, A=_{\mathcal{Z}}A^\prime=B+C \text{ and } |B|,|C|\ge 2\right\}.
$$

First, we claim that, if $A \in \widehat{\Omega}$ and $A=B+C$ for some $B,C \subseteq \mathbb{N}$ with $|B|,|C|\ge 2$, then both $B$ and $C$ need to be infinite sets. Indeed, suppose for the sake of contradiction that $B$ is a finite set and define $m:=1+\max B$. Since $A \in \widehat{\Omega}$ there exists an integer $a \in A$ bigger than $m$ such that $A \cap [a-m,a+m]=\{a\}$. However, since $a \in A$ there exist $b \in B$ and $c \in C$ such that $a=b+c$. Since $|B|\ge 2$ there exists $b^\prime \in B\setminus \{b\}$. This is a contradiction because $a^\prime:=b^\prime+c$ would be an integer in $A \cap [a-m,a+m]$ which is different from $a$. By symmetry, also $C$ needs to be infinite. 

Second, we claim that, if $A \in \widehat{\Omega}$ and $A=B+C$ for some infinite sets $B,C \subseteq \mathbb{N}$, then both $B$ and $C$ belong to $\mathcal{Z}$. For, let $(b_n: n \in \mathbb{N})$ be the increasing enumeration of the integers in $B$ and define $I_k:=[0, b_{k+1}-1]$ for all $k \in \mathbb{N}$. 
Fix $k \in \mathbb{N}$ and note that, if $c \in C(n)$ then $c+b_i \in (B+C)(n+b_{k+1})=A(n+b_{k+1})$ for all $i \in [0,k]$ and all $n \in \mathbb{N}$. 
Letting $\mathscr{S}_k$ be the family $\{S\subseteq \mathbb{N}: \{b_0,b_1,\ldots,b_k\}\subseteq S\subseteq I_k\}$, we obtain
$$
|C(n)|\le \sum_{S \in \mathscr{S}_k}\left|\{j \in [0,n+b_{k+1}]: A(n+b_{k+1}) \cap (I_k+j)=S+j\}\right|
$$
for all $n \in \mathbb{N}$. 
At this point, since $A \in \widehat{\Omega}$ and $|\mathscr{S}_k|=2^{|I_k|-(k+1)}$, it follows by \eqref{eq:kdsjhgkhf} that 
$$
|C(n)| \le |\mathscr{S}_k|\cdot \left(2^{-|I_k|}n+o(n)\right)\le 2^{-k}n
$$
for all sufficiently large $n \in \mathbb{N}$. 
By the arbitrariness of $k \in \mathbb{N}$, we conclude that $C \in \mathcal{Z}$ and, by symmetry, $B \in \mathcal{Z}$ as well. 

Third, note that, if $A \in \widehat{\Omega}$ and $A^\prime=_{\mathcal{Z}}A$, then, by the definition of normal numbers, $A^\prime \in \widehat{\Omega}$ as well. Putting everything together it follows the set $\widehat{\Omega}\setminus \Sigma(\mathcal{Z})$ can be rewritten equivalently as the family of all $A \in \widehat{\Omega}$ such that $A=_{\mathcal{Z}}A^\prime=B+C$ for some $A^\prime \in \widehat{\Omega}$ and some infinite $B,C \in \mathcal{Z}$. 
Therefore, by monotonicity, it is enough to check that $\lambda(h[\mathscr{A}])=0$, where 
$$
\mathscr{A}:=\left\{A\subseteq \mathbb{N}: \exists A^\prime\subseteq \mathbb{N}, \exists B,C\in \mathcal{Z}\cap \mathrm{Fin}^+,\,\, A=_{\mathcal{Z}}A^\prime=B+C\right\}.
$$
Let $k$ be a sufficiently large integer that will be chosen later (it will be enough to set $k=17$). Suppose that $A \in \mathscr{A}$ and pick $A^\prime \subseteq \mathbb{N}$ and infinite sets  $B,C \in \mathcal{Z}$ such that $A=_{\mathcal{Z}}A^\prime=B+C$. 
Then there exists $n_0=n_0(k)\in \mathbb{N}$ such that 
$A^\prime(n)=(B(n)+C(n)) \cap [0,n]$ and 
$$
\max\{|(A\bigtriangleup A^\prime)(n)|, |B(n)|, |C(n)|\}\le n/k
$$ 
for all $n\ge n_0$. At this point, for each $n \in \mathbb{N}$, let $\mathcal{E}_n$ be the family of all $X\subseteq [0,n]$ such that $\max\{|X\bigtriangleup X^\prime|, |Y|, |Z|\} \le n/k$ and $X^\prime=(Y+Z) \cap [0,n]$ for some $X^\prime, Y,Z\subseteq [0,n]$. 
Hence $A(n) \in \mathcal{E}_n$ for all $n\ge n_0$, which implies that 
\begin{equation}
\label{eq:inclusion}
\mathscr{A}
\subseteq
\bigcap_{n\ge n_0} \mathcal{E}_n
\subseteq
\bigcup_{m \in \mathbb{N}} \bigcap_{n\ge m} \mathcal{E}_n
\subseteq
\bigcap_{m \in \mathbb{N}} \bigcup_{n\ge m} \mathcal{E}_n.
\end{equation}
To conclude the proof, let us compute the probability $P(\mathcal{E}_n)$ of the event $\mathcal{E}_n$ with respect to the uniform probability measure $P$ on $[0,n]$. 
Observe that both $Y$ and $Z$ can be chosen in at most 
$$
w_{n,k}:=\sum_{i=0}^{n/k}\binom{n+1}{i}
$$
ways and, for each $X^\prime:=(Y+Z) \cap [0,n]$, the set $X$ can be obtained with at most $w_{n,k}$ modifications. Hence $X^\prime$ can be chosen in at most $w_{n,k}^2$ possibilities and, for each such $X^\prime$, its modification $X$ will be obtained in 
at most $w_{n,k}$ ways. 
Using Stirling's approximation, it follows that 
\begin{displaymath}
\begin{split}
P(\mathcal{E}_n) &\le \frac{1}{2^{n+1}}\cdot w_{n,k}^2 \cdot w_{n,k} 
\ll \frac{n^3}{2^n}\cdot \binom{n}{n/k}^3 \\
&\ll \frac{n^3}{2^n}\cdot \left(\frac{n^{n+\frac{1}{2}}}{(\frac{n}{k})^{\frac{n}{k}+\frac{1}{2}}\cdot (\frac{k-1}{k}\hspace{.4mm}n)^{\frac{k-1}{k}\hspace{.2mm}n+\frac{1}{2}}}\right)^3
=\frac{k^3}{(k-1)^{3/2}}\cdot \frac{n^{2}}{2^n}\cdot \alpha_k^{3n},
\end{split}
\end{displaymath}
as $n\to \infty$, where
$$
\alpha_k:=k^{1/k} \cdot \left(\frac{k}{k-1}\right)^{(k-1)/k}.
$$
Since $\alpha_t\to 1^+$ as $t\to \infty$, we can fix an integer $k \in \mathbb{N}$ for which $\alpha_k \in (1,2^{1/3})$. 
Hence there exists $c \in (1/2,1)$ for which 
$$
P(\mathcal{E}_n) \le c^n
$$ 
for all sufficiently large $n$. Since $\sum_n P(\mathcal{E}_n)<\infty$, it follows by Borel--Cantelli lemma and inclusion \eqref{eq:inclusion} that $\lambda(h[\mathscr{A}])=0$, which concludes the proof. 
\end{proof}

\begin{proof}
[Proof of Corollary \ref{cor:consequenceABmeager}]
First, suppose that $\mathcal{A}$ is not meager (the case $\mathcal{B}$ not meager is analogous). Then $\mathcal{A}+\mathcal{B}$ contains $\mathcal{A}+\{\{0\}\}=\mathcal{A}$, so that $\mathcal{A}+\mathcal{B}$ is not meager. 

Conversely, suppose that both $\mathcal{A}$ and $\mathcal{B}$ are meager, and note that
\begin{displaymath}
\begin{split}
\mathcal{A}+\mathcal{B}\subseteq 
& \bigcup\nolimits_{a \in \mathbb{N}: \{a\} \in \mathcal{A}}(\{a\}+\mathcal{B})\cup 
\bigcup\nolimits_{b \in \mathbb{N}: \{b\} \in \mathcal{B}}(\mathcal{A}+\{b\})\cup 
(\mathcal{P}(\mathbb{N})\setminus \Sigma).
\end{split}
\end{displaymath}
The claim follows by the fact that all sets $\{a\}+\mathcal{B}$ and  $\mathcal{A}+\{b\}$ are meager, and that $\mathcal{P}(\mathbb{N})\setminus \Sigma$ is meager as well by Theorem \ref{thm:meageranalogue}.
\end{proof}

\section{Concluding Remarks and Open Questions}\label{sec:conclusions}

In the same spirit of \cite[Section 6]{Bien2022b}, the statement of Theorem \ref{thm:mainmeager} holds also replacing $\mathbb{N}$ with a numerical submonoid of $\mathbb{N}$, that is, a pair $(M,+)$ where $M$ is a cofinite subset of $\mathbb{N}$ closed under sum. Indeed, the very same proof holds substituting the definition of $k_0=\max F_0$ with $k_0=\max (F_0 \cup M^c)$.

We leave as an open question for the interested reader to check whether Theorem \ref{thm:mainmeager} can be strenghtened to prove the comeagerness of 
$\Sigma(\mathcal{Z}_{1/2})$, on the same lines of Erd{\H{o}}s' conjecture, or even of the smaller subset $\Sigma(\mathcal{Z})$, in analogy with Theorem~\ref{thm:mainmeasure}.

Lastly, we conclude with an evocative question: is it true that every (set identified with a) normal number is not a nontrivial sumset? With the notation of the proof of Theorem \ref{thm:mainmeasure}, this amounts to ask whether the inclusion $\widehat{\Omega}\subseteq \Sigma$ holds.

\subsection{Acknowledgements} 
The author is grateful to an anonymous referee for a careful reading of the manuscript and several useful suggestions.

\bibliographystyle{amsplain}

\end{document}